\theoremstyle{plain}
\newtheorem{theorem}{Theorem}[section]
\newtheorem{lemma}[theorem]{Lemma}
\newtheorem{corollary}[theorem]{Corollary}
\numberwithin{equation}{section}
\theoremstyle{definition}
\newtheorem{definition}[theorem]{Definition}
\newtheorem{example}[theorem]{Example}
\newtheorem{remark}[theorem]{Remark}
\DeclareMathOperator{\reg}{reg}
\DeclareMathOperator{\Mod}{-Mod}
\DeclareMathOperator{\Tor}{Tor}
\DeclareMathOperator{\hd}{hd}
\DeclareMathOperator{\gd}{gd}
\DeclareMathOperator{\prd}{prd}
\DeclareMathOperator{\im}{im}
\DeclareMathOperator{\coker}{coker}
\newcommand{\C}{{\mathscr{C}}}
\newcommand{\bfr}{{\mathbf{r}}}
\newcommand{\bft}{{\mathbf{t}}}
\newcommand{\bfn}{{\mathbf{n}}}
\newcommand{\bff}{{\mathbf{f}}}
\newcommand{\bfg}{{\mathbf{g}}}
\newcommand{\bfh}{{\mathbf{h}}}
\newcommand{\bfo}{{\mathbf{1}}}
\newcommand{\FI}{{\mathrm{FI}}}
\newcommand{\N}{{\mathbb{N}}}
\newcommand{\Ob}{{\mathrm{Ob}}}
\title{Castelnuovo-Mumford regularity of representations of certain product categories}
\author{Wee Liang Gan}
\address{Department of Mathematics, University of California, Riverside, CA 92521, USA}
\email{wlgan@ucr.edu}
\author{Liping Li}
\address{LCSM(Ministry of Education), School of Mathematics and Statistics, Hunan Normal University, Changsha, Hunan 410081, China.}
\email{lipingli@hunnu.edu.cn}
\thanks{The second author was supported by the National Natural Science Foundation of China (Grant No. 11771135), HuXiang High-Level Talents Gathering Project by the Science and Technology Department of Hunan Province (Grant No. 2019RS1039), and the Research Foundation of Education Bureau of Hunan Province (Grant No. 18A016). Both authors appreciate the anonymous referee for carefully checking the manuscript and providing many very helpful and insightful comments.}
\begin{document}

\begin{abstract}
We show in this paper that representations of a finite product of categories satisfying certain combinatorial conditions have finite Castelnuovo-Mumford regularity if and only if they are presented in finite degrees, and hence the category consisting of them is abelian. These results apply to examples such as the categories $\FI^m$ and $\FI_G^m$.
\end{abstract}

\maketitle

\section{Introduction}

Representation theory of infinite categories is a relatively new research area with applications in topology, geometric group theory, algebraic geometry, and commutative algebras; see for instance \cite{CE, CEF, CEFN, Gad}. To study representation theoretic properties of an infinite category $\C$, usually the first step is to find a suitable abelian category of representations in which homological algebra can be carried out. Since the category of all representations of $\C$ (also called $\C$-modules) is too large for practical purpose, one naturally considers the category of finitely generated $\C$-modules. However, this idea does not always work well because of the following reasons: (1), there are many infinite categories (for example, the poset of positive integers equipped a partial ordering induced by division) which are not \emph{locally Noetherian} even when the coefficient ring is a field;, that is, submodules of finitely generated $\C$-modules might not be finitely generated; (2), it is not an easy task to show the locally Noetherian property of $\C$ over a commutative Noetherian coefficient ring, see for instance \cite{SS}; (3), for some applications in topology, people frequently have to consider infinitely generated $\C$-modules over arbitrary coefficient rings. Therefore, in many cases we expect to find intermediate abelian subcategories between the category of all $\C$-modules and the category of finitely generated $\C$-modules.

A significant resolution of this question was established by Church and Ellenberg in \cite{CE} for the category $\FI$ of finite sets and injections. They proved that if a representation $V$ of $\FI$ over an arbitrary commutative coefficient ring is \emph{presented in finite degrees} (see Definition \ref{presented in finite degrees}), then its \emph{Castelnuovo-Mumford regularity} (or \emph{regularity} for short, see Definition \ref{homological degrees}) is finite. Furthermore, they obtained an explicit and simple upper bound for the regularity in \cite[Theorem A]{CE}. This result immediately implies that representations of $\FI$ presented in finite degrees form an abelian category.

The main goal of this paper is to extend the above result of Church and Ellenberg to finite products $\C_1\times \cdots \times \C_m$ of categories $\C_1, \ldots, \C_m$ satisfying certain combinatorial conditions (see Subsections 2.2 and 3.1). An example of such finite products is the category $\FI^m$, the product of $m$ copies of $\FI$ for every positive integer $m$, which was introduced by Gadish in \cite{Gad, Gad2} and whose representation theoretic properties were also studied in \cite{LR2, LY2}.
Let $\C=\C_1\times \cdots \times \C_m$.
Under the combinatorial assumptions, objects of $\C$ form a ranked poset, so its representations (or $\C$-modules, see the definition in Subsection 2.3) have a graded structure, and hence we can define regularity for them. Based on an inductive machinery developed by the authors in \cite{GL3} as well as a key observation that a numerical invariant of an $\C$-module $V$ is finite whenever $V$ is presented in finite degrees, we prove the following result, partially answered a question proposed in \cite[Subsection 6.1]{LY2}.

\begin{theorem} \label{theorem for FI}
Let $\C=\C_1\times \cdots \times \C_m$ where $\C_1, \ldots, \C_m$ are categories satisfying the combinatorial conditions specified in Subsections 2.2 and 3.1. Then a $\C$-module $V$ over an arbitrary commutative coefficient ring has finite Castelnuovo-Mumford regularity if and only if it is presented in finite degrees.
\end{theorem}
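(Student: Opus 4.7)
The forward implication is essentially formal. If $\reg(V) = r < \infty$, then in particular $\hd_0(V) \leq r$ and $\hd_1(V) \leq r + 1$ are both finite, so $V$ is generated in finite degrees and has relations in finite degrees, hence is presented in finite degrees.

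For the converse, my plan is induction on the number $m$ of factors. The base case $m = 1$ should be a Church-Ellenberg-type theorem for a single category $\C_1$ satisfying the combinatorial assumptions of Subsections 2.2 and 3.1; I would expect the paper to have set this up as a preliminary, essentially by adapting the proof of \cite[Theorem A]{CE}. For the inductive step I would regard a $\C$-module $V$ as an iterated module, i.e. as a $(\C_1 \times \cdots \times \C_{m-1})$-module whose values live in $\C_m\module$, or symmetrically the other way. The aim is then to combine the induction hypothesis applied to $\C_1 \times \cdots \times \C_{m-1}$ with the base-case result for $\C_m$, using the inductive machinery of \cite{GL3} to bound $\hd_i(V)$ coordinate by coordinate.

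Concretely, the approach is to exploit, for each factor $\C_k$, a shift functor $\bfs_k$ together with short exact sequences of the form $0 \to T_k \to V \to \bfs_k V \to D_k V \to 0$, where $T_k$ is a torsion piece whose degree is controlled. The key observation alluded to in the introduction, which I take to be the finiteness of some torsion-type numerical invariant $\td(V)$ whenever $V$ is presented in finite degrees, ensures that these torsion pieces are negligible; granted this, one can bootstrap bounds on $\hd_i$ from bounds on $\hd_0$ and $\hd_1$ by iterated shifting in each coordinate. The main obstacle, as I see it, is precisely proving this key observation: in a product category, torsion in one factor may only be detected via relations that simultaneously involve other factors, so extracting a uniform bound on $\td(V)$ from a finite presentation requires careful bookkeeping of how the generation and relation data interact across coordinates. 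Once this invariant is under control, the regularity bound should follow from a double induction on $m$ and on the homological degree $i$, with the single-factor theorem disposing of the final coordinate at each inductive step.
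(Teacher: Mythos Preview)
Your forward implication is correct and matches the paper. The converse, however, is only a sketch, and the approach you outline is \emph{not} the one the paper takes.

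The paper does \emph{not} induct on the number of factors $m$. Instead it inducts on $\gd(V)$, treating the full product category $\C$ at once. The scheme is: given $V$ presented in finite degrees, build a rooted tree $\mathcal{T}(V)$ whose vertices are successive quotients $V/K_iV$ (the ``children'' of $V$, for those $i$ with $K_iV\neq 0$). One shows: (a) every vertex is presented in finite degrees, because $\gd(D_iW)<\gd(W)$ puts $\reg(D_iW)$ under the induction hypothesis, which in turn bounds $\hd_2(D_iW)$ and hence (via a homological-degree chase) $\prd(W/K_iW)$; (b) the tree is finite, because the invariant $t(W)=\sum_i t_i(W)$ drops strictly when passing to a child and is bounded below by $-m$; (c) vertices on the lowest level are torsion-free, so $\mathcal{R}(W)=[m]$ and a recursive inequality $\reg(W)\leqslant\reg(D_{[m]}W)+1$ finishes them off, after which one propagates finiteness of regularity up the tree using the same recursive inequality together with the short exact sequence relating $F_{\mathcal{R}(W)}W$ to $D_{[m]}W$ and the children of $W$.

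Your proposed route has two genuine gaps. First, the base case: the paper does \emph{not} establish a separate $m=1$ theorem for an arbitrary $\C_1$ satisfying the axioms of Subsections~2.2 and~3.1; Church--Ellenberg's \cite[Theorem~A]{CE} is specific to $\FI$, and the paper's proof for $m=1$ is already the full tree argument above. So you cannot simply appeal to a prior single-factor result. Second, the inductive step is not made precise: viewing $V$ as a $(\C_1\times\cdots\times\C_{m-1})$-module valued in $\C_m$-modules does not obviously produce an object to which the induction hypothesis applies, because the homology functors $H_s$ for the product category do not factor through the homology functors of the factors in any simple way, and ``presented in finite degrees'' for $V$ does not translate into an analogous finiteness statement for the iterated object without substantial work. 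Your identification of the main obstacle---controlling a torsion invariant from a finite presentation---is exactly right, but the paper resolves it with the tree-and-$t(V)$ mechanism rather than a coordinate-by-coordinate reduction.
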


The basic strategy to prove the above theorem is as follows. The category $\C$ is equipped with $m$ \emph{self-embedding functors}, which induce $m$ \emph{shift functors} $\Sigma_i$, $i \in [m]$, in the module category (see Subsection 3.1). When $V$ is a \emph{torsion-free} $\C$-module (see Subsection 2.6), there is a short exact sequence of $\C$-modules:
\begin{equation*}
0 \to V^{\oplus m} \to \bigoplus_{i \in [m]} \Sigma_i V \to \bigoplus_{i \in [m]} D_iV \to 0,
\end{equation*}
where $D_i$'s are the cokernel functors, and one can deduce the finiteness of regularity of $V$ from that of the third term via induction. For an arbitrary $\C$-module $V$, we construct a tree $\mathcal{T}(V)$ of quotient modules of $V$ recursively. We prove that it is a finite tree when $V$ is presented in finite degrees, and furthermore the modules on the lowest level are torsion free, turning to the special case previously handled. We also prove the fact that if all modules on a level of $\mathcal{T}(V)$ have finite regularity, so are all members lying on the level above it. This recursive method allows us to verify the finiteness of regularity of all members in $\mathcal{T}(V)$, in particular that of $V$.

A cornerstone of the above strategy is the finiteness of the tree $\mathcal{T}(V)$. In \cite{GL3}, the authors considered finitely generated representations of several infinite combinatorial categories over commutative Noetherian rings, where the finiteness of this tree follows directly from the locally Noetherian property of these categories. In this paper we introduce a new idea; that is, for $\C$-modules $V$ presented in finite degrees, we assign an integral numerical invariant to each member in $\mathcal{T}(V)$, and show that if a member is a \emph{child} (see Definition \ref{children}) of another member, then the numerical invariant of the former is strictly smaller than that of the later one. Now the finiteness of $\mathcal{T}(V)$ is guaranteed by the fact that the numerical invariants of all $\C$-modules share a common lower bound.

This paper is organized as follows. In Section \ref{preliminaries}, we describe the setting for our results and recall some basic definitions. In Section \ref{shift functors}, we discuss shift functors. In Section \ref{main proof}, for every $\C$-module, we construct a tree of quotient modules and use it to give a proof for the main theorem. The last section contains corollaries of the main theorem.

\section{Preliminaries} \label{preliminaries}

\subsection{Notations}
Denote by $\N$ the set of non-negative integers. For each $n\in \N$, we set $[n]=\{1,\ldots, n\}$; in particular, $[0]=\emptyset$. We fix a commutative ring $k$ with identity.

\subsection{The categories $\C_1, \ldots, \C_m$ and their product $\C$.}
Throughout this paper, we let $\C$ be the product $\C_1\times \cdots \times \C_m$ where each $\C_i$ is a category satisfying the following conditions:
\begin{itemize}
\item
$\Ob(\C_i)=\N$.

\item
For each $r,n\in \Ob(\C_i)$, the morphism set $\C_i(r, n)$ is nonempty if and only if $r\leqslant n$.

\item
If $r<m<n$, every morphism $f: r \to n$ in $\C_i$ is a composition $f=hg$ for some morphisms $g:r\to m$
and $h:m\to n$.

\item
For each $n\in\Ob(\C_i)$, every endomorphism of $n$ in $\C_i$ is an isomorphism.

\item
If $r<n$, the group $\C_i(n,n)$ acts transitively on the set $\C_i(r,n)$.
\end{itemize}
In the terminology of \cite{GL1}, $\C_i$ is an EI category of type A$_\infty$ satisfying the transitivity condition.
\begin{example}
The skeleton of $\FI$ whose objects are $[n]$ for all $n\in \N$ satisfies the above conditions.
\end{example}

The objects of the category $\C=\C_1\times \cdots\times \C_m$ are $m$-tuples $\bfn = (n_1, \ldots, n_m)$ where each $n_i\in \N$, and the morphisms are $m$-tuples $\bff = (f_1, \ldots, f_m): \bfr \to \bfn$ where each $f_i\in \C_i(r_i, n_i)$.
There is a partial ordering on $\Ob(\C)$ defined as follows: $\bfr \leqslant \bfn$ if and only if $r_i \leqslant n_i$ for each $i\in [m]$, or equivalently, the morphism set $\C (\bfr, \bfn)$ is nonempty. We note that $\C$ is a \emph{graded category}. Explicitly, for an object $\bfn$ and a morphism $\bff: \bfr \to \bfn$, one defines the rank $|\bfn| = \sum_{i \in [m]} n_i$ and the degree $|\bff| = |\bfn| - |\bfr|$. It is clear that every morphism of degree $s$ in $\C$ can be written as a composition of $s$ morphisms of degree 1.

\subsection{$\C$-modules}
By definition, a \textit{$\C$-module} (or a representation of $\C$) is a covariant functor from $\C$ to $k \Mod$, the category of all $k$-modules, and a homomorphism between two $\C$-modules is a natural transformation. Since $k \Mod$ is an abelian category, so is $\C \Mod$, the category of all $\C$-modules. Furthermore, the abelian category $\C\Mod$ has enough projective objects (see, for example, \cite[Exercise 2.3.8]{Wei}). In particular, the $k$-linearization of representable functors $M(\bfn) = k\C(\bfn, -)$ are projective $\C$-modules. By a \emph{free $\C$-module}, we mean a $\C$-module isomorphic to one of the form $\bigoplus_{i\in I} M(\bfn_i)$ where $I$ is any indexing set and $\bfn_i \in \Ob(\C)$. For every $\C$-module $V$, there is a surjective $\C$-module homomorphism
\begin{equation} \label{surject}
\bigoplus _{\bfn \in \mathbb{N}^m} M(\bfn)^{\oplus c_{\bfn}} \to V \to 0
\end{equation}
where the multiplicities $c_{\bfn}$ could be infinity. It follows that every projective $\C$-module is a direct summand of a free $\C$-module.

\begin{definition} \label{presented in finite degrees}
A $\C$-module $V$ is \emph{generated in finite degrees} if there exists a natural number $N_V$, dependent on $V$, and a surjective homomorphism of the form \eqref{surject} such that the multiplicity $c_{\bfn}$ can be taken to be 0 for objects $\bfn$ satisfying $|\bfn| \geqslant N_V$. The $\C$-module $V$ is \emph{presented in finite degrees} if there exists a surjective homomorphism of the above form such that both $V$ and the kernel are generated in finite degrees.
\end{definition}

\begin{remark} \normalfont
The above definition can be restated in a more intuitive way. That is, a $\C$-module $V$ is generated in finite degrees if there exists a natural number $N_V$ such that for every object $\bfn$ with $|\bfn| \geqslant N_V$, the value $V_{\bfn}$ of $V$ on the object $\bfn$ satisfies the following equality:
\begin{equation*}
V_{\bfn} = \sum_{\begin{matrix} \bfr \lneqq \bfn \\ \bff \in \C(\bfr, \bfn) \end{matrix}} \bff \cdot V_{\bfr},
\end{equation*}
where $\bff \cdot V_{\bfr}$ means $V(\bff) (V_{\bfr})$, the image of $V_{\bfr}$ under the $k$-linear map $V(\bff)$.
\end{remark}

\subsection{The category algebra.} There is another way to study $\C$-modules from the traditional ring theoretic viewpoint. The \emph{category algebra} $k\C$ is set to be the free $k$-module whose basis is the set of morphisms in $\C$. Multiplication is defined by the following rule: given two morphisms $\bff: \bfr \to \bfn$ and $\bfg: \textbf{q} \to \textbf{t}$, let $\bff \cdot \bfg$ be the composition (from right to left) $\bff \circ \bfg$ if $\textbf{t} = \bfr$, and 0 otherwise. Then $k\C$ is a non-unital associative $k$-algebra. Furthermore, the graded structure of $\C$ induces a graded structure on $k\C$, and it is generated in degrees 0 and 1. Explicitly, one has the following decomposition:
\begin{equation*}
k\C = \bigoplus_{s \in \mathbb{N}} \Big{(} \bigoplus _{\begin{matrix} \bfn, \bfr \in \mathbb{N}^m \\ |\bfn| - |\bfr| = s \end{matrix}}  k\C(\bfr, \bfn) \Big{)}.
\end{equation*}
In particular, the direct sum $\mathfrak{m}$ over all $s\geqslant 1$ is a two-sided ideal of $k\C$; it is precisely the free $k$-module spanned by all non-invertible morphisms.

Let $k\C \Mod$ be the category of all $k\C$-modules. By \cite[Theorem 7.1]{M}, the category $\C \Mod$ may be identified with a full subcategory of $k\C \Mod$. In particular, the free $\C$-modules $k\C(\bfn, -)$ are identified with projective $k\C$-modules of the form $k\C e_{\bfn}$, where $e_{\bfn}$ is the identity morphism on $\bfn$ and is also viewed as an idempotent in the algebra $k\C$.

\subsection{Homology groups of $\C$-modules.} Let $V$ be a $\C$-module. The \emph{zeroth homology group} $H_0(V)$ of $V$ is defined as follows. For each $\bfn \in \Ob(\C)$, let
\begin{equation*}
(H_0(V))_{\bfn} = V_{\bfn} \bigg/ \sum_{\bfr \lneqq \bfn} k\C(\bfr, \bfn) \cdot V_{\bfr}.
\end{equation*}
Then $H_0(V)$ is a $\C$-module on which every non-invertible morphism in $\C$ acts as the zero map.

The functor $H_0: V \mapsto H_0(V)$ is right exact, so we make the following definition:

\begin{definition} \label{homological degrees}
For each $s \in \mathbb{N}$, we define $H_s:\C\Mod\to\C\Mod$ to be the $s$-th left derived functor of $H_0$. For any $\C$-module $V$, the $s$-th homological degree $\hd_s(V)$ is defined by
\begin{equation*}
\hd_s(V) = \sup \{ |\bfn| \mid (H_s(V))_{\bfn} \neq 0\}
\end{equation*}
or $-1$ when the above set is empty. The Castelnuovo-Mumford regularity of $V$ is:
\begin{equation*}
\reg(V) = \sup \{ \hd_s(V) - s \mid s \in \mathbb{N} \}.
\end{equation*}
\end{definition}

Suppose $V$ is a $\C$-module. We call $\hd_0(V)$ and $\max\{\hd_0(V), \hd_1(V)\}$ the \emph{generation degree} and \emph{presentation degree} of $V$, and denote them by $\gd(V)$ and $\prd(V)$ respectively. It is easy to see that there exists a surjective morphism $P\to V$ such that $P$ is a free $\C$-module with $\gd(P)=\gd(V)$.

\begin{remark} \normalfont
In \cite{GL3, LY2}, the authors provided another abstract version for the above definitions. Recall that the graded category algebra $k\C$ has a two-sided ideal $\mathfrak{m}$ spanned by all non-invertible morphisms in $\C$. Then one can check that $H_0(V) \cong (k\C/\mathfrak{m}) \otimes_{k\C} V$, and hence $H_s(V) \cong \Tor_s^{k\C} (k\C/\mathfrak{m}, V)$. Moreover, using the language of homological degrees, Definition \ref{presented in finite degrees} can be restated as follows: a $k\C$-module $V$ is generated in finite degrees if $\gd(V)$ is finite, and is presented in finite degrees if $\prd(V)$ is finite.
\end{remark}

\subsection{Torsion theory of $\C$-modules.}
Let $V$ be a $\C$-module. An element $v \in V_{\bfr}$ for some $\bfr\in\Ob(\C)$ is \emph{torsion} if for some $\bfn\in\Ob(\C)$ and $\bff \in \C(\bfr, \bfn)$, one has $\bff \cdot v = 0$. Note that the group $\C(\bfn,\bfn)$ acts transitively on $\C(\bfr, \bfn)$, so if $\bff \cdot v = 0$ for some $\bff \in \C(\bfr, \bfn)$, then all morphisms in $\C(\bfr, \bfn)$ send $v$ to 0. We say that $V$ is a \emph{torsion module} if for every $\bfn\in \Ob(\C)$, all elements $v \in V_{\bfn}$ are torsion. For an arbitrary $\C$-module $V$, there is a canonical short exact sequence $0 \to V_T \to V \to V_F \to 0$ such that $V_T$ is torsion, and $V_F$ is \emph{torsion-free}; that is, $V_F$ contains no nonzero torsion elements.

We now define a key invariant $t(V)$ recording certain information of the torsion part of $V$.

\begin{definition} \label{torsion vector}
Suppose $V$ is a $\C$-module.  For each $i\in [m]$, define
\begin{equation*}
t_i(V) = \sup \{ s \mid \exists \, \bfn \text{ and } 0 \neq v \in V_{\bfn} \text{ such that } k\C(\bfn, \bfn + \bfo_i) \cdot v = 0 \text{ and } n_i = s \},
\end{equation*}
where $\bfo_i$ is the $m$-tuple with 1 at the $i$-th coordinate and zeroes at other coordinates; if the above set is empty, we set $t_i(V) = -1$ by convention. We define the torsion vector $\bft (V)$ of $V$ to be the $m$-tuple $(t_1(V), \ldots, t_m(V))$, and define
\[ t(V) = t_1(V) + \cdots +  t_m(V). \]
\end{definition}

It is easy to see that $t(V)$ has a lower bound $-m$, and $t(V) = -m$ if and only if $V$ is torsion-free. In general, $t(V)$ might not be a finite number.

\begin{remark} \normalfont
Loosely speaking, we can view objects in $\C$ as ``integral points" in the free module $k^m$. For $i \in [m]$, if $t_i(V) \geqslant 0$ (including the case that $t_i(V)$ is infinite), then there exist a ``hyperplane" perpendicular to the $i$-th coordinate axis, an object $\bfn$ lying in this hyperplane, and a nonzero element $v \in V_{\bfn}$ such that $v$ becomes 0 when it moves one step along the $i$-th direction. In this situation, $t_i(V)$ is precisely the supremum of the ``heights" of these hyperplanes.
(The torsion vector of an $\FI^m$-module $V$ was introduced in \cite[Definition 2.8]{LY2} with a different version of definition. We would like to point out that $t(V)$ is distinct from the \emph{torsion degree} in \cite[Definition 2.8]{LY2}, which is not the sum but the supremum of these $t_i(V)$'s.)
\end{remark}

\section{Shift functors} \label{shift functors}

\subsection{Assumptions and definitions.}
From now on, we make the following assumptions on the category $\C_i$ for every $i\in [m]$:
\begin{enumerate}[(i)]
\item
We assume that there is a faithful functor
\[ \psi_i : \C_i\to\C_i \]
such that $\psi_i(n)=n+1$ for every $n\in \Ob(\C_i)$. Define the \emph{shift functor} $\Psi_i$ on $\C_i\Mod$ to be the pullback functor $\psi_i^*$, that is,
\[ \Psi_i: \C_i\Mod\to  \C_i\Mod, \qquad V \mapsto V\circ \psi_i. \]
Note that for every $\C_i$-module $V$ and $n\in\Ob(\C_i)$, one has $(\Psi_i V)_n = V_{n+1}$.

\item
We assume that there is a natural transformation
\[ \varepsilon_i: \mathrm{Id}_{\C_i} \to \psi_i, \]
where $\mathrm{Id}_{\C_i}$ denotes the identity functor on $\C_i$. Note that for each $n\in\Ob(\C_i)$, one has the morphism $(\varepsilon_i)_n\in \C_i(n, n+1)$. The natural transformation $\varepsilon_i$ induces a natural transformation
\[ \mu_i : \mathrm{Id}_{\C_i\Mod} \to \Psi_i, \]
where $\mathrm{Id}_{\C_i\Mod}$ denotes the identity functor on $\C_i\Mod$. Explicitly, for each $\C_i$-module $V$, the homomorphism
\[(\mu_i)_V: V \to \Psi_i V \]
is defined at each $n\in\Ob(\C_i)$  by $V_n \to V_{n+1}$, $v\mapsto (\varepsilon_i)_n \cdot v$.

\item
For each $n\in \Ob(\C_i)$, denote the free $\C_i$-module $k\C_i(n,-)$ by $M_i(n)$.
We assume that the homomorphism
\[ (\mu_i)_{M_i(n)} : M_i(n) \to \Psi_i M_i(n) \]
is injective and its cokernel is a projective $\C_i$-module with generation degree $\leqslant n-1$.
\end{enumerate}

\begin{remark}
In the terminology of \cite[Definition 1.2]{GL2}, the shift functor $\Psi_i$ is, in particular, a generic shift functor.
\end{remark}

For any $\C_i$-module $V$, we denote by $\Delta_i V$ the cokernel of $(\mu_i)_V: V \to \Psi_i V$. Thus, for each $n\in \N$, $\Delta_i M_i(n)$ is a projective $\C_i$-module and $\gd(M_i(n))\leqslant n-1$.

\begin{lemma} \label{shift of free module}
Let $i\in [m]$. For every $n\in \Ob(\C_i)$, the $\C_i$-module $\Psi_i M_i(n)$ is projective and has generation degree $\leqslant n$.
\end{lemma}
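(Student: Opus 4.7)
The plan is to read off the result directly from assumption (iii). By that assumption, the map $(\mu_i)_{M_i(n)} : M_i(n) \to \Psi_i M_i(n)$ is injective, so we obtain a short exact sequence of $\C_i$-modules
\begin{equation*}
0 \to M_i(n) \to \Psi_i M_i(n) \to \Delta_i M_i(n) \to 0,
\end{equation*}
where the rightmost term is, by the definition preceding the lemma, the cokernel of $(\mu_i)_{M_i(n)}$.

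Next I would invoke the second half of assumption (iii): $\Delta_i M_i(n)$ is projective. Since projectivity of the rightmost term in a short exact sequence forces the sequence to split, we obtain
\begin{equation*}
\Psi_i M_i(n) \;\cong\; M_i(n) \oplus \Delta_i M_i(n).
\end{equation*}
Both summands are projective ($M_i(n)$ is free by definition, and $\Delta_i M_i(n)$ is projective by hypothesis), so their direct sum $\Psi_i M_i(n)$ is projective. This handles the first half of the conclusion.

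For the generation degree, I would use that $\gd$ of a direct sum equals the maximum of the $\gd$'s of the summands. Since $M_i(n) = k\C_i(n,-)$ is generated by the identity at $n$, one has $\gd(M_i(n)) = n$; and by assumption (iii) again, $\gd(\Delta_i M_i(n)) \leqslant n-1$. Therefore
\begin{equation*}
\gd(\Psi_i M_i(n)) \;=\; \max\{\gd(M_i(n)),\, \gd(\Delta_i M_i(n))\} \;\leqslant\; \max\{n, n-1\} \;=\; n,
\end{equation*}
completing the argument. There is no real obstacle here: the statement is essentially a bookkeeping consequence of the injectivity plus projectivity built into assumption (iii); the only subtlety worth flagging is the implicit appeal to the splitting lemma, which is what turns ``cokernel is projective'' into a direct sum decomposition.
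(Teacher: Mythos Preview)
Your proposal is correct and follows essentially the same route as the paper: both arguments set up the short exact sequence $0 \to M_i(n) \to \Psi_i M_i(n) \to \Delta_i M_i(n) \to 0$ from assumption (iii) and read off projectivity and the generation-degree bound from the outer two terms. The only cosmetic difference is that you make the splitting explicit, whereas the paper leaves implicit the standard facts that an extension of projectives is projective and that $\gd$ of the middle term is bounded by the maximum of the $\gd$'s of the ends.
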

\begin{proof}
There is a short exact sequence
\[ 0 \to M_i(n) \to \Psi_i M_i(n) \to \Delta_i M_i(n)  \to 0. \]
Since $M_i(n)$ and $\Delta_i M_i(n)$ are projective $\C_i$-modules, it follows that $\Psi_i M_i(n)$ is also projective. Since $\gd (M_i(n)) = n$ and $\gd (\Delta_i M_i(n)) \leqslant n-1$, we have $\gd (\Psi_i M_i(n))\leqslant n$.
\end{proof}

\begin{remark} \label{FI has shift}
If $\C_i$ is the skeleton of $\FI$ whose objects are $[n]$ for all $n\in \N$, then $\C_i$ satisfies all the above assumptions (see \cite[Proposition 2.12]{CEFN}).
\end{remark}

\subsection{Shift functors for $\C$}
Recall that $\C=\C_1\times \cdots \times \C_m$. Let $i \in [m]$. We define the functor $\rho_i: \C \to \C$ by
\[ \rho_i = (\mathrm{Id}_{\C_1}, \ldots, \psi_i, \ldots, \mathrm{Id}_{\C_m}) \]
where $\psi_i$ is in the $i$-th coordinate. Note that $\rho_i(\bfn) = \bfn + \bfo_i$ for each $\bfn\in\Ob(\C)$. We define the $i$-th \emph{shift functor} $\Sigma_i$ on  $\C\Mod$ to be the pullback functor $\rho_i^*$, that is,
\[ \Sigma_i: \C \Mod \to \C \Mod, \quad V \mapsto V \circ \rho_i.\]

Similarly, the natural transformation $\varepsilon_i: \mathrm{Id}_{\C_i} \to \psi_i$ induces a natural transformation $\mathrm{Id}_{\C} \to \rho_i$, which in turn induces a natural transformation $\mathrm{Id}_{\C\Mod} \to \Sigma_i$. Thus, for each $\C$-module $V$, we have a natural homomorphism $V \to \Sigma_i V$. Explicitly, for each $\bfn \in \Ob(\C)$, the natural map $V_\bfn \to (\Sigma_i V)_\bfn$ is defined by $v \mapsto \bfh \cdot v$ where $\bfh \in \C(\bfn, \bfn+\bfo_i)$ with $h_i = (\varepsilon_i)_{n_i} \in \C_i(n_i, n_i+1)$ and $h_j = \mathrm{Id}_{n_j} \in \C_j(n_j, n_j)$ if $j\neq i$.

Let $K_i V$ and $D_i V$ be, respectively, the kernel and cokernel of the natural homomorphism $V\to \Sigma_i V$, so that we have a natural exact sequence
\begin{equation*}
0 \to K_iV \to V \to \Sigma_i V \to D_i V \to 0.
\end{equation*}
Suppose $\bfn\in \Ob(\C)$. Since the group $\C(\bfn+\bfo_i, \bfn+\bfo_i)$ acts transitively on $\C(\bfn, \bfn+\bfo_i)$, it follows that
\begin{align*}
(K_i V)_\bfn &= \{ v\in V_\bfn \mid \bff \cdot v = 0 \mbox{ for some } \bff \in \C(\bfn, \bfn+\bfo_i)\} \\
&=   \{ v\in V_\bfn \mid \bff \cdot v = 0 \mbox{ for every } \bff \in \C(\bfn, \bfn+\bfo_i) \}.
\end{align*}

For each $i \in [m]$ and $\C$-module $V$, we denote by $V^i$ the $i$-th copy of $V$ in the direct sum $V^{\oplus m}$. For any subset $S \subset [m]$, let
\[ V^S = \bigoplus_{i\in S} V^i, \qquad
\Sigma_S V = \bigoplus_{i\in S} \Sigma_i V,  \qquad
K_S V = \bigoplus_{i\in S} K_i V, \qquad
D_S V = \bigoplus_{i\in S} D_i V. \]
In particular, if $S$ is the empty set, then these are the zero module. We have a natural exact sequence
\[ 0 \to K_S V \to V^S \to \Sigma_S V \to D_S V \to 0.
\]
It is plain that the functor $\Sigma_S$ is exact, and the functor $D_S$ is right exact.

\begin{lemma} \label{basic properties}
Suppose $V$ is a $\C$-module and $S$ is a subset of $[m]$.
\begin{enumerate}
\item For every $\bfn\in\Ob(\C)$, the $\C$-modules $\Sigma_S M(\bfn)$ and $D_SM(\bfn)$ are projective. Moreover, one has: $\gd(\Sigma_S M(\bfn)) \leqslant |\bfn|$, and $\gd(D_S M(\bfn))\leqslant |\bfn|-1$.
\item If $V$ is nonzero, then one has:
\[ \gd(D_{[m]} V) \leqslant \gd(\Sigma_{[m]} V) \leqslant \gd(V) = \gd(D_{[m]}V) + 1.\]
\item  If $V$ is nonzero, then one has:
\[ \prd(\Sigma_S V) \leqslant \prd(V), \qquad \prd(D_S V) \leqslant \prd(V)-1. \]
In particular, if $V$ is presented in finite degrees, then $\Sigma_S V$ and $D_S V$ are also presented in finite degrees.
\item The $\C$-module $K_{[m]} V$ is zero if and only if $V$ is torsion-free.
\item For every $i \in [m]$, one has: $t_i(V) \leqslant \gd(K_i V)$.
\item For every $i, j \in [m]$, one has $t_i(\Sigma_j V) \leqslant t_i(V)$. Moreover, $t_i(\Sigma_iV) < t_i(V)$ whenever $t_i(V) \in \mathbb{N}$.
\end{enumerate}
\end{lemma}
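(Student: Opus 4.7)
The plan is to handle the six parts in sequence, each leveraging the earlier ones where convenient.

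Part (1) drops out of the external-tensor factorization $M(\bfn) \cong M_1(n_1) \boxtimes \cdots \boxtimes M_m(n_m)$: since $\Sigma_i$ acts as $\Psi_i$ on the $i$-th factor and trivially elsewhere, Lemma 3.2 and assumption (iii) describe $\Sigma_i M(\bfn)$ and $D_i M(\bfn)$ as external tensors of projective $\C_j$-modules with generation degrees summing to at most $|\bfn|$ and $|\bfn|-1$ respectively; summing over $i \in S$ finishes the argument.

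For Part (2), I would fix a surjection $P \twoheadrightarrow V$ with $P$ free and $\gd(P) = \gd(V)$. The easy inequalities $\gd(\Sigma_{[m]}V) \leqslant \gd(V)$ and $\gd(D_{[m]}V) \leqslant \gd(V)-1$ follow from exactness of $\Sigma_{[m]}$, right-exactness of $D_{[m]}$, and Part (1); the middle inequality holds because $D_{[m]}V$ is a quotient of $\Sigma_{[m]}V$. The substantive claim is the reverse $\gd(V) \leqslant \gd(D_{[m]}V)+1$. Here I would pick $\bfn$ with $|\bfn| = \gd(V) \geqslant 1$ and a lift $v \in V_{\bfn}$ of a nonzero class in $H_0(V)_{\bfn}$, and then, for any $i$ with $n_i \geqslant 1$, argue that the image $\bar v \in (D_iV)_{\bfn-\bfo_i}$ represents a nonzero class in $H_0(D_iV)_{\bfn-\bfo_i}$: if not, $v$ would lie in $\bfh_i V_{\bfn-\bfo_i} + \sum_{\bfr \lneqq \bfn-\bfo_i} \rho_i(k\C(\bfr, \bfn-\bfo_i)) V_{\bfr+\bfo_i}$, each summand of which sits inside $\mathfrak{m} V_{<\bfn}$ because $\bfh_i \in \C(\bfn-\bfo_i, \bfn)$ and $\rho_i(\C(\bfr, \bfn-\bfo_i)) \subseteq \C(\bfr+\bfo_i, \bfn)$ with $\bfr+\bfo_i \lneqq \bfn$; this would contradict the choice of $v$.

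For Part (3), keeping $P$ and setting $K = \ker(P \to V)$, I would first note that $\gd(K) \leqslant \prd(V)$ via the Tor long exact sequence $0 \to H_1(V) \to H_0(K) \to H_0(P) \to H_0(V) \to 0$, which sandwiches $H_0(K)$ between two modules of generation degree at most $\prd(V)$. Applying the exact, projective-preserving $\Sigma_S$ to $0 \to K \to P \to V \to 0$ then gives $\prd(\Sigma_S V) \leqslant \prd(V)$ immediately. For $D_S V$, I would invoke the snake lemma applied to the natural transformation $V \to \Sigma_i V$ on the rows of that short exact sequence; since $K_S P = 0$ (free modules are torsion-free by assumption (iii)), the snake sequence reduces to $0 \to K_S V \to D_S K \to D_S P \to D_S V \to 0$. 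Extracting the short exact sequence $0 \to L \to D_S P \to D_S V \to 0$ with $L = \im(D_S K \to D_S P)$, and combining $\gd(D_S P) \leqslant \gd(P)-1$ (Part (1)) with $\gd(L) \leqslant \gd(D_S K) \leqslant \gd(K)-1$ (Part (2) applied to $K$), gives $\prd(D_S V) \leqslant \prd(V)-1$.

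Parts (4)--(6) are comparatively direct. Part (4): a torsion witness $\bff$ factors into degree-one morphisms, and the first step where the iterated image vanishes exhibits a nonzero element of some $K_i V$. Part (5): for $u \in (K_i V)_{\mathbf{m}}$, any morphism $\mathbf{m} \to \mathbf{k}$ with $k_i > m_i$ factors through $\mathbf{m}+\bfo_i$ and therefore annihilates $u$, so the cyclic submodule $\langle u\rangle \subseteq V$ is supported on objects with $i$-th coordinate at most $m_i$; summing over a generating set of $K_i V$ of degrees at most $\gd(K_i V)$ shows $K_i V$ is supported on objects of $i$-th coordinate at most $\gd(K_i V)$, yielding $t_i(V) \leqslant \gd(K_i V)$. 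Part (6): if $v \in (K_i \Sigma_j V)_{\bfn}$, then $v \in V_{\bfn+\bfo_j}$ is annihilated by the subset $\rho_j(\C(\bfn, \bfn+\bfo_i))$ of $\C(\bfn+\bfo_j, \bfn+\bfo_j+\bfo_i)$; transitivity promotes this to annihilation by the full Hom-set, so $v \in (K_i V)_{\bfn+\bfo_j}$, and the $i$-th coordinate bumps by $\delta_{ij}$, yielding both assertions (with finiteness of $t_i(V)$ ruling out the infinite case when $j = i$). The main obstacle is the reverse inequality in Part (2): controlling how $\mathfrak{m}$ acts on $D_i V$ at $\bfn-\bfo_i$ and matching it against the $\mathfrak{m}$-action on $V$ at $\bfn$ demands a careful analysis of the image of $\rho_i$ inside the morphism sets of $\C$.
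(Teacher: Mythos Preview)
Your proposal is correct and follows essentially the same approach as the paper throughout all six parts. The only noticeable variation is in the reverse inequality of Part~(2): you argue directly that a lift $v$ of a top-degree class in $H_0(V)_{\bfn}$ has nonzero image in $H_0(D_iV)_{\bfn-\bfo_i}$ by tracing the $\rho_i$-twisted action, whereas the paper first passes to the quotient $V/V'$ (with $V'$ generated in degrees $<|\bfn|$) so that the computation becomes trivial; these are equivalent arguments of the same difficulty.
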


\begin{proof}
(1) For any $i\in[m]$, one has:
\begin{gather*}
\Sigma_i M(\bfn) = M_1(n_1) \boxtimes \cdots \boxtimes \Psi_i M_i(n_i)  \boxtimes \cdots \boxtimes M_m(n_m), \\
D_i M(\bfn) = M_1(n_1) \boxtimes \cdots \boxtimes \Delta_i M_i(n_i)  \boxtimes \cdots \boxtimes M_m(n_m).
\end{gather*}
By Lemma \ref{shift of free module}, $\Psi_i M_i(n_i)$ is a projective $\C_i$-module, so it is a direct summand of a free $\C_i$-module. It follows that $\Sigma_i M(\bfn)$ is a direct summand of a free $\C$-module. Also, by Lemma \ref{shift of free module}, one has $\gd(\Psi_i M_i(n_i))\leqslant n_i$, so $\gd(\Sigma_i M(\bfn)) \leqslant |\bfn|$. Similarly for $D_iM(\bfn)$.

(2) Since $D_{[m]} V$ is a quotient of $\Sigma_{[m]} V$, we have $\gd(D_{[m]}V)\leqslant \gd(\Sigma_{[m]} V)$.

Let $P\to V$ be a surjective morphism where $P$ is a nonzero free $\C$-module with $\gd(P)=\gd(V)$. Since $\Sigma_{[m]}$ is exact and $D_{[m]}$ is right exact, we have surjective morphisms $\Sigma_{[m]} P \to \Sigma_{[m]} V$  and $D_{[m]} P \to D_{[m]} V$. It follows, using (1), that
\begin{gather*}
\gd(\Sigma_{[m]} V) \leqslant \gd(\Sigma_{[m]} P) \leqslant \gd(P)=\gd(V),\\
\gd(D_{[m]} V) \leqslant \gd(D_{[m]} P) \leqslant \gd(P)-1 = \gd(V)-1.
\end{gather*}

It remains to prove that $\gd(D_{[m]} V) \geqslant \gd(V)-1$. Suppose there exists $\bfn\in\Ob(\C)$ such that $|\bfn|\geqslant 1$ and $(H_0(V))_\bfn \neq 0$. Let $V'$ be the $\C$-submodule of $V$ generated by $V_{\bft}$ for all $\bft \in\Ob(\C)$ such that $|\bft|<|\bfn|$. Then we have $(V/V')_\bft = 0$ if $|\bft|<|\bfn|$. Since $|\bfn|\geqslant 1$, there exists $i\in[m]$ such that $n_i\geqslant 1$; set $\mathbf{r} = \bfn-\bfo_i \in \Ob(\C)$. Since $(V/V')_\mathbf{r} = 0$ but $(\Sigma_i (V/V'))_{\mathbf{r}} = (V/V')_\bfn \neq 0$, we have $(D_i (V/V'))_\mathbf{r} \neq 0$, and so $(D_{[m]} (V/V'))_\mathbf{r} \neq 0$, hence $D_{[m]} (V/V')$ is a nonzero $\C$-module. But for every $\bft\in \Ob(\C)$ such that $|\bft|<|\mathbf{r}|$, we have $(\Sigma_{[m]} (V/V'))_\bft = 0$, so $(D_{[m]} (V/V'))_\bft = 0$. Hence,
\[ \gd(D_{[m]} V) \geqslant \gd(D_{[m]} (V/V')) \geqslant |\mathbf{r}| = |\bfn| -1 . \]
It follows that $\gd(D_{[m]} V) \geqslant \gd(V)-1$.

(3) There is a short exact sequence $0 \to W \to P \to V \to 0$ such that $P$ is a free $\C$-module with $\gd(P)=\gd(V)$. We have:
\[ \gd(W) \leqslant \max\{\gd(P), \hd_1(V)\} = \prd(V).\]
Applying the exact functor $\Sigma_S$, we obtain the short exact sequence $0 \to \Sigma_S W \to \Sigma_S P \to \Sigma_S V \to 0$. By (1), $\Sigma_S P$ is a projective $\C$-module with $\gd(\Sigma_S P) \leqslant \gd(P)=\gd(V)$. Using (2), we have:
\[ \gd(\Sigma_S V) \leqslant \gd(V), \qquad  \hd_1(\Sigma_S V) \leqslant \gd(\Sigma_S W) \leqslant \gd(W) \leqslant \prd(V). \]
Therefore, $\prd(\Sigma_S V) \leqslant \prd(V)$.

Similarly, applying the right exact functor $D_S$, we get an exact sequence $D_S W \to D_S P \to D_S V \to 0$. Using (2), we have:
\[ \gd(D_S V) \leqslant \gd(V)-1, \qquad \gd(D_S W) \leqslant \max\{-1, \gd(W)-1\} \leqslant \prd(V)-1. \]
Noting that the kernel of $D_S P \to D_S V \to 0$ is a quotient module of $D_S W$, we conclude that
\[ \hd_1(D_S V) \leqslant \gd(D_S W) \leqslant \prd(V)-1. \]
Consequently, $\prd(D_S V)\leqslant \prd(V)-1$.

(4) If $K_{[m]} V$ is nonzero, then $K_i V$ is nonzero for some $i\in [m]$, in which case $(K_i V)_\bfn \neq 0$ for some $\bfn\in \Ob(\C)$, and so $V_\bfn$ contains a nonzero torsion element.

Conversely, suppose $V$ is not torsion-free. Then there exists a morphism $\bff \in \C(\bfn,\bfr)$ of positive degree such that $\bff \cdot v = 0$ for some nonzero $v\in V_\bfn$. Since every morphism is a composition of morphisms of degree $1$, there exists such a morphism $\bff$ of degree $1$, say $\bff\in \C(\bfn, \bfn+\bfo_i)$. It follows that $(K_i V)_\bfn \neq 0$, so $K_i V$ is nonzero, and hence $K_{[m]}V$ is nonzero.

(5) We only need to consider the case that $\gd(K_iV)$ is finite. An element $v \in V_{\bfn}$ is contained in $K_iV$ if and only if it vanishes when moving one step along the $i$-th direction. Thus $K_iV$ is either 0 or can be decomposed into a direct sum of direct summands, each of which is supported on a “hyperplane” perpendicular to the $i$-th axis.
If $t_i(V) > \gd(K_iV)$, then $t_i(V) \geqslant 0$. By Definition \ref{torsion vector}, there exist an object $\bfn$ and a nonzero element $v \in V_{\bfn}$ such that $n_i > \gd(K_iV)$ and $v$ vanishes when it moves one step along the $i$-th direction. In this situation, $v \in (K_i V)_\bfn$, so $K_iV$ has a direct summand supported on the hyperplane $\mathcal{H}$ consisting of objects $\bfr$ with $r_i = n_i$. Clearly, the zeroth homology group of this summand is also supported on $\mathcal{H}$, so the generation degree of this summand must be at least $n_i$. This forces $\gd(K_iV) \geqslant n_i$, which is a contradiction.

(6) To prove the first inequality, we assume that $t_i(V)$ is finite. If the inequality does not hold, then $t_i(\Sigma_j V) \geqslant 0$, so there exist an object $\bfn$ and a nonzero element $v \in (\Sigma_j V)_{\bfn}$ such that $n_i > t_i(V)$, and $v$ vanishes when it moves along the $i$-th direction. That is, one has $\C(\bfn, \bfn + \bfo_i) \cdot v = 0$ where $v$ is viewed as an element in $\Sigma_j V$. But $(\Sigma_j V)_{\bfn} = V_{\bfn + \bfo_j}$. By the definition of shift functors, one has $\rho_j (\C(\bfn, \bfn + \bfo_i)) \cdot v = 0$ where $v$ is regarded as an element in $V$. Note that
\begin{equation*}
\rho_j (\C(\bfn, \bfn + \bfo_i)) \subseteq \C(\bfn + \bfo_j, \bfn + \bfo_j + \bfo_i),
\end{equation*}
so $v \in V_{\bfn + \bfo_j}$ also vanishes when it moves one step along the $i$-th direction. Consequently, $t_i(V) \geqslant (\bfn + \bfo_j)_i \geqslant n_i$, which is a contradiction.

Now we turn to the second inequality. Since $t_i(V) \geqslant 0$, we know that $K_iV$ is nonzero. The second inequality can be proved similarly by noting that if $i = j$ and $n_i \geqslant t_i(V)$, then one can deduce that $t_i(V) \geqslant (\bfn + \bfo_i)_i = n_i + 1$, which is also a contradiction.
\end{proof}

\begin{remark} \normalfont
The above results have been verified when $\C$ is a skeleton of the category $\FI$ in \cite{CEFN, L, LY}. In particular, for $\FI$-modules $V$, one has $t(V) = \gd(KV)$. This fact plays a crucial role in \cite{L}, where the second author provided an alternative proof for the upper bound of Castelnuovo-Mumford regularity of $\FI$-modules appearing in \cite[Theorem A]{CE}. However, this equality in general does not hold when $\C$ is a skeleton of $\FI^m$ for $m > 1$, and the reader can easily find a counterexample.
\end{remark}

Recall that for a $\C$-module $V$, we let $t(V)$ be the sum of all $t_i(V)$ for $i \in [m]$, and $t(V) \geqslant -m$. The above results have two useful corollaries.

\begin{corollary} \label{corollary one}
Let $V$ be a $\C$-module and $i \in [m]$. If $t_i(V) \geqslant 0$, then $t(V/K_iV) \leqslant t(V) -1$.
\end{corollary}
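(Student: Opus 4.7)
The plan is to realize $V/K_iV$ as a submodule of $\Sigma_iV$ and then bound its torsion invariants via the already established properties of the shift functor in Lemma \ref{basic properties}(6). By the very construction of $K_iV$ as the kernel of the natural map $V \to \Sigma_i V$, there is a canonical injection
\[ V/K_iV \hookrightarrow \Sigma_i V \]
of $\C$-modules. So the first step is just to record this embedding.

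Next, I would observe that the invariants $t_j(-)$ are monotone along submodules: if $W \subseteq W'$ is a $\C$-submodule, then $t_j(W) \leqslant t_j(W')$ for every $j \in [m]$. This is immediate from Definition \ref{torsion vector}, since any nonzero $v \in W_{\bfn}$ annihilated by $\C(\bfn,\bfn+\bfo_j)$ is also nonzero in $W'_\bfn$ with the same annihilation property. Applying this to the embedding above yields $t_j(V/K_iV) \leqslant t_j(\Sigma_i V)$ for each $j \in [m]$.

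Now I reduce to the finite case: if $t(V) = \infty$, then $t(V/K_iV) \leqslant t(V) - 1 = \infty$ holds trivially, so assume $t_j(V) \in \N \cup \{-1\}$ for every $j$. In particular, since the hypothesis says $t_i(V) \geqslant 0$, we have $t_i(V) \in \N$. By Lemma \ref{basic properties}(6), we get $t_j(\Sigma_iV) \leqslant t_j(V)$ for all $j \in [m]$, together with the strict inequality $t_i(\Sigma_iV) \leqslant t_i(V) - 1$ provided by the ``moreover'' clause. Combining these with the monotonicity inequality of the previous step gives
\[ t_i(V/K_iV) \leqslant t_i(V) - 1, \qquad t_j(V/K_iV) \leqslant t_j(V) \text{ for } j \neq i. \]
Summing over $j \in [m]$ then produces $t(V/K_iV) \leqslant t(V) - 1$.

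There is no real obstacle here; the work has already been done in Lemma \ref{basic properties}(6). The only point that needs a moment of care is the monotonicity of $t_j$ under submodule inclusion, but this is essentially immediate from the definition. The proof is short enough that it amounts mostly to stringing together the embedding $V/K_iV \hookrightarrow \Sigma_iV$ with the shift-invariant estimates.
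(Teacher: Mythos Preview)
Your proof is correct and follows essentially the same approach as the paper: embed $V/K_iV$ into $\Sigma_iV$, use monotonicity of $t_j$ under submodules (which the paper phrases as ``it is not hard to see that $t_j(V/K_iV)\leqslant t_j(\Sigma_iV)$ by the definition of torsion vectors''), and then apply Lemma~\ref{basic properties}(6). The only cosmetic difference is that you spell out the monotonicity step a bit more explicitly.
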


\begin{proof}
Without loss of generality we can assume that $t(V)$ is finite, so $t_j(V)$ is also finite for every $j \in [m]$. Since $t_i(V) \geqslant 0$, we know that $K_i(V) $ is nonzero. Consider the short exact sequence
\begin{equation*}
0 \to V/K_iV \to \Sigma_i V \to D_iV \to 0
\end{equation*}
induced by the exact sequence
\begin{equation*}
0 \to K_i V \to V \to \Sigma_i V \to D_i V \to 0.
\end{equation*}
It is not hard to see that $t_j(V/K_iV) \leqslant t_j (\Sigma_i V)$ for all $j \in [m]$ by the definition of torsion vectors. But by the previous lemma, $t_j(\Sigma_i V) \leqslant t_j(V)$ for $j \neq i$, and $t_i(\Sigma_i V) \leqslant t_i(V) -1$. The conclusion follows.
\end{proof}

\begin{corollary} \label{corollary two}
Let $V$ be a $\C$-module presented in finite degrees and $i \in [m]$, and suppose that $\hd_2(D_i V)$ is finite. Then $K_i V$ is generated in finite degrees, $V/K_i V$ is presented in finite degrees, and $t_i(V)$ is finite. More precisely, $\gd(K_i V)$, $\prd(V/K_i V)$ and $t_i(V)$ are all less than or equal to $\max\{ \prd(V), \hd_2 (D_i V)\}$.
\end{corollary}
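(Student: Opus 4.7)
The plan is to extract the desired bounds from the standard long exact sequences obtained by splitting the four-term sequence
\[ 0 \to K_iV \to V \to \Sigma_iV \to D_iV \to 0 \]
into two short exact sequences, namely
\[ 0 \to K_iV \to V \to V/K_iV \to 0 \quad\text{and}\quad 0 \to V/K_iV \to \Sigma_iV \to D_iV \to 0, \]
and then chase degrees using the results already in Lemma \ref{basic properties}. Let me set $a = \prd(V)$, $b = \hd_2(D_iV)$, and $N = \max\{a, b\}$; both are finite by hypothesis, and by Lemma \ref{basic properties}(3) we have $\prd(\Sigma_i V) \leqslant a$ and $\prd(D_iV) \leqslant a - 1$.

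First I would handle $\prd(V/K_iV)$. From the second short exact sequence, the long exact sequence in homology contains the segments
\[ H_1(D_iV) \to H_0(V/K_iV) \to H_0(\Sigma_iV) \quad\text{and}\quad H_2(D_iV) \to H_1(V/K_iV) \to H_1(\Sigma_iV). \]
The first segment yields $\hd_0(V/K_iV) \leqslant \max\{\hd_1(D_iV), \hd_0(\Sigma_iV)\} \leqslant \max\{a-1, a\} = a \leqslant N$, while the second yields $\hd_1(V/K_iV) \leqslant \max\{b, \hd_1(\Sigma_iV)\} \leqslant \max\{b, a\} = N$. Hence $\prd(V/K_iV) \leqslant N$, and in particular $V/K_iV$ is presented in finite degrees.

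Next I would bound $\gd(K_iV)$. The first short exact sequence produces the segment
\[ H_1(V/K_iV) \to H_0(K_iV) \to H_0(V), \]
so $\gd(K_iV) = \hd_0(K_iV) \leqslant \max\{\hd_1(V/K_iV), \hd_0(V)\} \leqslant \max\{N, a\} = N$. Finally, $t_i(V) \leqslant \gd(K_iV) \leqslant N$ by Lemma \ref{basic properties}(5), completing all three inequalities.

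There is no serious obstacle here; the argument is a routine diagram chase. The one thing to get right is the bookkeeping: one must apply Lemma \ref{basic properties}(3) to get control of $\Sigma_iV$ and $D_iV$ from $V$, and then feed these into the correct positions in the two long exact sequences so that $\hd_2(D_iV)$ is the single place where the hypothesis on $D_iV$ beyond its presentation degree is used.
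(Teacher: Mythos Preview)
Your proof is correct and follows essentially the same route as the paper: split the four-term sequence into the same two short exact sequences, bound $\hd_1(V/K_iV)$ from the second via $\hd_2(D_iV)$ and $\hd_1(\Sigma_iV)$, then feed that into the first to bound $\gd(K_iV)$, and finish with Lemma~\ref{basic properties}(5). The only cosmetic difference is that the paper bounds $\gd(V/K_iV)$ directly by $\gd(V)$ (as a quotient) rather than via the long exact sequence, but this changes nothing.
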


\begin{proof}
We may assume $V$ is nonzero, for otherwise the statements are trivial.
Since $V$ is presented in finite degrees, so are $\Sigma_i V$ and $D_i V$ by part (3) of Lemma \ref{basic properties}, where we let $S = \{ i \}$. Now break the exact sequence $0 \to K_i V \to V \to \Sigma_i V \to D_i V \to 0$ into two short exact sequences
\begin{equation*}
0 \to K_i V \to V \to V/K_i V \to 0
\end{equation*}
and
\begin{equation*}
0 \to V/K_i V \to \Sigma_i V \to D_i V \to 0.
\end{equation*}
We have $\gd(V/K_iV) \leqslant \gd(V)$. From the long exact sequence of homology groups associated to the second short exact sequence, we deduce that:
\begin{gather*}
\hd_1(V/K_i V) \leqslant \max\{ \hd_1(\Sigma_i V), \hd_2 (D_i V) \} \leqslant \max\{ \prd(V), \hd_2(D_i V) \},
\end{gather*}
where we used Lemma \ref{basic properties} (3). Hence, $V/K_i V$ is presented in finite degrees.

Similarly, from the long exact sequence associated to the first short exact sequence, we deduce that:
\[
\gd(K_i V) \leqslant \max\{ \gd(V), \hd_1(V/K_i V) \} \leqslant \max\{ \prd(V), \hd_2( D_i V) \}.
\]
Hence, $K_i V$ is generated in finite degrees. By Lemma \ref{basic properties} (5), it follows that
$t_i(V)\leqslant \max\{ \prd(V), \hd_2(D_i V)\}$.
\end{proof}

\section{A proof of the main theorem} \label{main proof}

Let $V$ be a $\C$-module. In this section we follow the algorithm described in \cite[Section 3]{GL3} to construct a tree of quotient modules of $V$, and use it to prove the main theorem.

An element $i \in [m]$ is said to be a \emph{singular index} of $V$ if $K_i V \neq 0$, and otherwise it is called a \emph{regular index} of $V$. Let $\mathcal{S}(V)$ and $\mathcal{R}(V)$ be, respectively, the sets of singular indices and regular indices of $V$. For every element $i \in \mathcal{S} (V)$, $K_iV$ is nonzero, so $V/K_iV$ is a proper quotient module of $V$.

\begin{definition} \label{children}
Let $V$ be any $\C$-module. We call the collection of quotient modules $V/K_i V$ for all $i\in \mathcal{S}(V)$ the \emph{children} of $V$.
\end{definition}

We now construct a tree as follows. First, we put $V$ on the zeroth level of the tree. Next, we put the children of $V$ on level -1 of the tree, and draw an arrow from $V$ to each of its children. We continue this process for each child of $V$ and keep going to get a tree $\mathcal{T}(V)$ of quotient modules of $V$. Each vertex in this tree (except $V$ itself) is called a \emph{descendant} of $V$. Clearly, for the $s$-th level on $\mathcal{T}(V)$, there are at most $m^{-s}$ vertices (note that $s$ is a non-positive integer). But for an arbitrary $\C$-module, $\mathcal{T}(V)$ might not be a finite tree since it may have infinitely many levels.

\begin{remark} \normalfont
Subsets of $\mathcal{R}(V)$ are called \emph{nil subsets} in \cite{GL3}, and $\mathcal{R}(V)$ is called the maximal nil subset.
\end{remark}

For any subset $S\subset [m]$, define
\[ F_S V
=\left( \bigoplus_{i\in S} D_i V \right) \oplus \left( \bigoplus_{i\in [m]\setminus S} \Sigma_i V \right),
\]
that is, $F_S V  = D_S V \oplus \Sigma_{[m]\setminus S} V $.
In particular, $F_\emptyset V = \Sigma_{[m]} V$ and $F_{[m]} V = D_{[m]} V$. For each $i\in [m]$, there is an exact sequence
\[ 0 \to V/K_i V \to \Sigma_i V \to D_i V \to 0. \]
It follows that if $S\subset T \subset [m]$, then there is a short exact sequence
\begin{equation} \label{short exact sequence for F}
0 \to \bigoplus_{i \in T\setminus S}  V/K_i V \to F_S V \to F_T V \to 0.
\end{equation}

\begin{lemma} \label{filtration}
Let $V$ be a $\C$-module. Then we have a short exact sequence
\begin{equation*}
0 \to \bigoplus_{i \in \mathcal{S}(V)} V/K_i V \to F_{\mathcal{R}(V)} V \to D_{[m]} V \to 0.
\end{equation*}
That is, the kernel of the surjective map $F_{\mathcal{R}(V)} V \to D_{[m]} V$ is the direct sum of the children of $V$.
\end{lemma}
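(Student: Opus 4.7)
The plan is to derive the sequence as the special case of the short exact sequence~\eqref{short exact sequence for F} obtained by setting $S = \mathcal{R}(V)$ and $T = [m]$. For this choice one has $T \setminus S = [m] \setminus \mathcal{R}(V) = \mathcal{S}(V)$, while the target $F_T V = F_{[m]} V$ collapses to $D_{[m]} V$ straight from the definition of $F_\bullet V$. Substituting these values into~\eqref{short exact sequence for F} yields exactly the asserted exact sequence.

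For concreteness I would briefly unpack the map and its kernel. By definition,
\begin{equation*}
F_{\mathcal{R}(V)} V \;=\; \Bigl( \bigoplus_{i \in \mathcal{R}(V)} D_i V \Bigr) \oplus \Bigl( \bigoplus_{i \in \mathcal{S}(V)} \Sigma_i V \Bigr),
\end{equation*}
and $D_{[m]} V$ decomposes identically except that $\Sigma_i V$ is replaced by $D_i V$ on the right-hand block. The surjection $F_{\mathcal{R}(V)} V \twoheadrightarrow D_{[m]} V$ is then the identity on each $\mathcal{R}(V)$-indexed summand and the canonical quotient $\Sigma_i V \twoheadrightarrow D_i V$ on each $\mathcal{S}(V)$-indexed summand. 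Since the kernel of $\Sigma_i V \to D_i V$ is by definition the image of the natural map $V \to \Sigma_i V$, namely $V/K_i V$, the total kernel is $\bigoplus_{i \in \mathcal{S}(V)} V/K_i V$, as claimed. The indices in $\mathcal{R}(V)$ contribute nothing to this kernel, consistent with the fact that although $V/K_i V \cong V$ for $i \in \mathcal{R}(V)$, those copies do not appear in the statement because the corresponding blocks already agree on both sides.

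There is no serious obstacle in this argument; all the content is packaged in~\eqref{short exact sequence for F}, which itself is merely the direct sum of the $m$ natural short exact sequences $0 \to V/K_i V \to \Sigma_i V \to D_i V \to 0$, with the $i$-th summand handled via the identity of $D_i V$ for $i \in S$, via the quotient $\Sigma_i V \twoheadrightarrow D_i V$ for $i \in T \setminus S$, and via the identity of $\Sigma_i V$ for $i \in [m] \setminus T$.
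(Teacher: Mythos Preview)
Your proof is correct and follows exactly the paper's approach: the authors simply observe that the statement is immediate from \eqref{short exact sequence for F} upon taking $S=\mathcal{R}(V)$ and $T=[m]$. Your additional unpacking of the map and its kernel is accurate and merely makes explicit what is already contained in the construction of \eqref{short exact sequence for F}.
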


\begin{proof}
Immediate from \eqref{short exact sequence for F} by taking $S=\mathcal{R}(V)$ and $T=[m]$.
\end{proof}

\begin{remark} \normalfont
We would like to point out the difference of notations between this paper and \cite{GL3} to avoid possible confusion. Specifically, for $S \subseteq [m]$, the module $D_SV$ in this paper is the cokernel of the map $V^S \to \Sigma_S V$, whereas $D_SV$ in \cite{GL3} is the cokernel of the map $V^S \to \Sigma_{[m]} V$. In other words, the module $D_S V$ in \cite{GL3} is same as $F_S V$ in this paper; in particular, for $S = \mathcal{R}(V)$, the module $D_{\mathcal{R}(V)}V$ defined in \cite{GL3} is precisely $F_{\mathcal{R}(V)} V$ in this paper.
\end{remark}

The following lemma is from \cite[Proposition 4.3]{GL3} (keeping in mind the differences between our present notations and \cite{GL3}.) For the convenience of the reader, we give its proof since the assumptions in \cite{GL3} are somewhat different.

\begin{lemma} \label{recursive inequality}
Let $V$ be a nonzero $\C$-module. If $S\subset \mathcal{R} (V)$, then $\reg(V) \leqslant \reg(F_S V)  + 1$. In particular, $\reg(V) \leqslant \reg(F_{\mathcal{R}(V)} V) + 1$.
\end{lemma}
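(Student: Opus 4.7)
My plan is to use the short exact sequence
\[ 0 \to V^S \to \Sigma_{[m]} V \to F_S V \to 0, \]
which arises from \eqref{short exact sequence for F} with $S' = \emptyset$ and $T = S$: since $K_iV = 0$ for every $i \in S \subseteq \mathcal{R}(V)$, each summand $V/K_iV$ in the kernel equals $V$, and the kernel reduces to $V^S$. The edge case $S = \emptyset$ must be handled separately; there $F_\emptyset V = \Sigma_{[m]}V$ and the desired estimate $\reg(V) \leqslant \reg(\Sigma_{[m]}V) + 1$ can be derived from the short exact sequence $0 \to \bigoplus_{i \in [m]} V/K_iV \to \Sigma_{[m]}V \to D_{[m]}V \to 0$ together with Lemma \ref{basic properties}(2) lifted to higher homological degree. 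Assume henceforth that $S \neq \emptyset$.

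Since $S$ is nonempty we have $\hd_s(V^S) = \hd_s(V)$, so the long exact sequence in homology yields
\[ \hd_s(V) - s \;\leqslant\; \max\bigl\{\hd_s(\Sigma_{[m]}V) - s,\; \reg(F_SV) + 1\bigr\}. \]
The second argument already matches the desired bound, so it suffices to establish the auxiliary inequality $\reg(\Sigma_{[m]}V) \leqslant \reg(F_SV) + 1$. Decomposing $\Sigma_{[m]}V = \Sigma_SV \oplus \Sigma_{[m]\setminus S}V$, the second summand is a direct summand of $F_SV$ and thus has regularity at most $\reg(F_SV)$; the crux is to bound $\reg(\Sigma_iV)$ for every $i \in S$.

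For each such $i$ I would use the short exact sequence $0 \to V \to \Sigma_iV \to D_iV \to 0$ (valid because $K_iV = 0$), whose LES gives $\hd_s(\Sigma_iV) \leqslant \max\{\hd_s(V),\, \hd_s(D_iV)\}$, a self-referential bound in terms of $\hd_s(V)$ itself. I would close the loop by induction: exploiting that $\mathcal{R}(V) \subseteq \mathcal{R}(\Sigma_iV)$ (which follows from the transitivity axiom on the $\C_i$'s, since torsion-freeness in any given direction is preserved under any shift $\Sigma_i$) and that $\Sigma_i$ commutes with $F_S$, one applies the lemma inductively to $\Sigma_iV$ to obtain $\reg(\Sigma_iV) \leqslant \reg(\Sigma_iF_SV) + 1$. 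The main obstacle I anticipate is the companion bound $\reg(\Sigma_iF_SV) \leqslant \reg(F_SV)$, which requires controlling how $\Sigma_i$ affects higher homological degrees; this should follow from $\Sigma_i$ being exact and sending free modules to projectives of no larger generation degree (Lemma \ref{basic properties}(1)), via a term-by-term comparison along a projective resolution of $F_SV$.
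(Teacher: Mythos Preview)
Your proposal has a genuine gap: the ``induction'' you invoke to bound $\reg(\Sigma_iV)$ is not well-founded. You want to apply the lemma itself to $\Sigma_iV$ (with the same $S$) to obtain $\reg(\Sigma_iV) \leqslant \reg(F_S\Sigma_iV) + 1 = \reg(\Sigma_iF_SV) + 1$, but this is exactly the statement being proved, applied to a module that is in no sense smaller than $V$: the generation degree need not drop, and you have not named any other decreasing invariant. The circularity is therefore unresolved --- your long exact sequence bounds $\hd_s(V)$ in terms of $\hd_s(\Sigma_{[m]}V)$, which via $0 \to V \to \Sigma_iV \to D_iV \to 0$ is bounded in terms of $\hd_s(V)$ again, at the \emph{same} homological index $s$. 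The separate treatment of $S=\emptyset$ has the same defect: ``Lemma~\ref{basic properties}(2) lifted to higher homological degree'' is essentially the statement to be proved.

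The paper breaks the circularity by inducting on the homological index $s$ and dimension-shifting through a free cover rather than through a shift. Taking $0 \to W \to P \to V \to 0$ with $P$ free and $\gd(P) = \gd(V)$, one has $\hd_s(V) \leqslant \hd_{s-1}(W)$, a genuine decrease in $s$; crucially $W \subset P$ is torsion-free, so $S \subset \mathcal{R}(W)$ and the inductive hypothesis applies to $W$. The remaining comparison $\reg(F_SW) \leqslant \reg(F_SV) + 1$ comes from the short exact sequence $0 \to F_SW \to F_SP \to F_SV \to 0$ (snake lemma, using $K_iP = 0$ for $i\in S$) together with the projectivity and generation bound for $F_SP$ from Lemma~\ref{basic properties}(1). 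Your ancillary observations --- that $\Sigma_i$ commutes with $F_S$, that $\mathcal{R}(V) \subseteq \mathcal{R}(\Sigma_iV)$, and that $\reg(\Sigma_iU) \leqslant \reg(U)$ via exactness of $\Sigma_i$ --- are all correct, but without a well-founded recursion they do not assemble into a proof.
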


\begin{proof}
We shall prove by induction on $s\geqslant 0$ that if $V$ is \emph{any} nonzero $\C$-module and $S\subset \mathcal{R} (V)$, then
\[ \hd_s(V) \leqslant \reg(F_S V) + s + 1. \]
This would imply that $\reg(V)\leqslant \reg(F_S V)+1$.

Let us first consider the case $s=0$. We have $\hd_0(V)=\gd(V) = \gd(D_{[m]}V) + 1$, where we used Lemma \ref{basic properties} (2). Since $D_{[m]} V$ is a quotient of $F_S V$, we have $\gd(D_{[m]} V) \leqslant \gd(F_S V)$. Hence,
\[\hd_0(V) \leqslant \gd(F_S V) + 1 \leqslant \reg(F_S V) + 1.\]

Next, suppose $s\geqslant 1$. Take a short exact sequence $0\to  W \to P \to V \to 0$ where $P$ is a free $\C$-module with $\gd(P)=\gd(V)$. If $W$ is zero, then $V$ is a free module and we are done, so suppose $W$ is nonzero. Since $P$ is free, we have $\mathcal{R}(P)=[m]$, so $\mathcal{R}(W)=[m]$, and so $S\subset \mathcal{R}(W)$. Therefore,
\[ \hd_s(V) \leqslant \hd_{s-1}(W) \leqslant \reg(F_S W) + s, \]
where the last inequality holds by induction hypothesis. It suffices to show that $\reg(F_S W) \leqslant \reg(F_S V) + 1$.

Since $\Sigma_S$ is an exact functor, we have the short exact sequence $0\to \Sigma_S W \to \Sigma_S P \to \Sigma_S V \to 0$ and a commuting diagram
\[
\xymatrix{
0 \ar[r] & W^S \ar[r] \ar[d] & P^S \ar[r] \ar[d] & V^S \ar[r] \ar[d] & 0\\
0 \ar[r] & \Sigma_{[m]} W \ar[r] & \Sigma_{[m]} P \ar[r] & \Sigma_{[m]} V \ar[r] & 0.}
\]
Hence we have an exact sequence $0 \to F_S W \to F_S P \to F_S V \to 0$. By Lemma \ref{basic properties} (1), $F_S P$ is projective and
\[ \gd(F_S P) \leqslant \gd(P) = \gd(V) = \gd(D_{[m]} V)+1 \leqslant \gd(F_S V)+1, \]
so $\gd(F_S W) \leqslant \max\{ \hd_1(F_S V), \gd(F_S P) \} \leqslant \max\{ \hd_1(F_S V), \hd_0(F_S V) + 1 \}$.
For each $r\geqslant 1$ we have
$\hd_r(F_S W) = \hd_{r+1} (F_S V)$.
Hence, $\reg(F_S W) \leqslant \reg(F_S V) + 1$.
\end{proof}

Now we are ready to prove the following main theorem.

\begin{theorem} \label{main theorem}
Let $V$ be a $\C$-module. Then $\reg(V)$ is finite if and only if $V$ is presented in finite degrees.
\end{theorem}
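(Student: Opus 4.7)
The ``only if'' direction is immediate: $\hd_0(V) \leqslant \reg(V)$ and $\hd_1(V) \leqslant \reg(V) + 1$, so $\prd(V) \leqslant \reg(V) + 1 < \infty$ whenever $\reg(V) < \infty$.

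For the ``if'' direction my plan is strong induction on $\prd(V)$. The base case $\prd(V) = -1$ is trivial since then $V = 0$. For the inductive step, assume the claim for every $\C$-module of strictly smaller presentation degree. By Lemma \ref{basic properties}(3), $\prd(D_iV) \leqslant \prd(V) - 1$, so by the inductive hypothesis $\reg(D_iV) < \infty$ for every $i \in [m]$; in particular $\hd_2(D_iV) < \infty$. Corollary \ref{corollary two} then forces $t_i(V) < \infty$ for each $i$, so $t(V)$ is finite. Since $t(\cdot) + m \geqslant 0$ always and Corollary \ref{corollary one} shows that $t(\cdot) + m$ strictly decreases when passing from a module to any of its children, the quotient tree $\mathcal{T}(V)$ is finite of depth at most $t(V) + m$, with every leaf torsion-free.

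With $\mathcal{T}(V)$ finite I would prove $\reg(W) < \infty$ for each $W \in \mathcal{T}(V)$ by a secondary induction on tree depth, working upward from the leaves. For a torsion-free leaf $W$ we have $\mathcal{S}(W) = \emptyset$, so Lemma \ref{filtration} degenerates to $F_{\mathcal{R}(W)} W = D_{[m]} W$, and Lemma \ref{recursive inequality} gives
\[
\reg(W) \leqslant \reg(D_{[m]} W) + 1 = \max_{j \in [m]} \reg(D_j W) + 1.
\]
For an internal node $W$ with $\mathcal{S}(W) \neq \emptyset$, the short exact sequence of Lemma \ref{filtration} together with the long exact sequence of homology yields $\reg(F_{\mathcal{R}(W)} W) \leqslant \max\{\max_{i \in \mathcal{S}(W)} \reg(W/K_i W),\ \max_{j \in [m]} \reg(D_j W)\}$, and Lemma \ref{recursive inequality} then gives
\[
\reg(W) \leqslant \max\bigl\{\, \max_{i \in \mathcal{S}(W)} \reg(W/K_i W),\ \max_{j \in [m]} \reg(D_j W)\,\bigr\} + 1.
\]
Finiteness of the children's regularities is supplied by the secondary inductive hypothesis.

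The principal obstacle is verifying $\reg(D_j W) < \infty$ for every descendant $W \neq V$ in $\mathcal{T}(V)$. Corollary \ref{corollary two} only bounds $\prd(V/K_i V) \leqslant \max\{\prd(V), \hd_2(D_i V)\}$, which can strictly exceed $\prd(V)$, so the primary inductive hypothesis cannot be applied to $D_j W$ directly when $W$ lies properly below $V$. My plan is to strengthen the secondary induction by tracking uniform estimates along $\mathcal{T}(V)$: using the primary inductive hypothesis to set $R = \max_i \reg(D_i V) < \infty$, then iterating Corollary \ref{corollary two} together with a snake-lemma analysis of the defining short exact sequences $0 \to K_i W' \to W' \to W \to 0$ of the tree edges, one can bound $\prd(W)$ and $\hd_2(D_j W)$ for every $W \in \mathcal{T}(V)$ in terms of $R$, $\prd(V)$, and the depth of $W$. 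Because the depth is bounded by the finite quantity $t(V) + m$, these estimates remain finite throughout, producing the required finiteness of $\reg(D_j W)$ and closing the induction.
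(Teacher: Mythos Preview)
Your outline is very close to the paper's argument, and you correctly identify the one real obstacle: for a proper descendant $W$ in $\mathcal{T}(V)$ you need $\reg(D_j W)<\infty$, but your primary induction on $\prd$ does not apply to $D_j W$ because $\prd(W)$ may exceed $\prd(V)$.  Your proposed fix---tracking ``uniform estimates'' for $\prd(W)$ and $\hd_2(D_jW)$ down the tree via snake-lemma bookkeeping---is not actually carried out, and it is not clear it can be.  Bounding $\hd_2(D_jW)$ is essentially a regularity statement for $D_jW$; to obtain it from the short exact sequences $0\to K_iW'\to W'\to W\to 0$ or $0\to W\to\Sigma_iW'\to D_iW'\to 0$ you would need regularity control on objects like $D_j\Sigma_iW'$ or $K_jD_iW'$, and these are not covered by your hypothesis $R=\max_i\reg(D_iV)<\infty$ once $W'\neq V$.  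The argument as written is circular at this point.

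The paper resolves this with a single change of viewpoint: induct on $\gd(V)$ rather than $\prd(V)$.  Every vertex $W$ of $\mathcal{T}(V)$ is a \emph{quotient} of $V$, so $\gd(W)\leqslant\gd(V)$, and therefore $\gd(D_jW)\leqslant\gd(W)-1\leqslant\gd(V)-1$ for every $W$ in the tree, not just for $W=V$.  Combined with Lemma~\ref{basic properties}(3), $D_jW$ is presented in finite degrees with strictly smaller generation degree, so the primary inductive hypothesis gives $\reg(D_jW)<\infty$ uniformly over the whole tree in one stroke.  After that, your Steps for the finiteness of $\mathcal{T}(V)$ and the secondary bottom-up induction go through exactly as you wrote them.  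Switching the induction parameter from $\prd$ to $\gd$ is the missing idea.
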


\begin{proof}
One direction is immediate since by the definition of regularity, if $\reg(V)$ is finite, so are all homological degrees of $V$. The proof of the other direction is based on an induction of $\gd(V)$.

Suppose that $V$ is presented in finite degrees. If $\gd(V) = -1$, that is, $V = 0$, the conclusion holds trivially.  Assume that $V$ is nonzero.

\textbf{Step 1:} We show that every vertex in the tree $\mathcal{T}(V)$ is presented in finite degrees. It suffices to show that if a nonzero vertex $W$ is presented in finite degrees, so are its children.
Suppose $i\in [m]$. By statements (2) and (3) of Lemma \ref{basic properties},
$D_i W$ is presented in finite degrees, and
\[ \gd(D_i W) \leqslant \gd(D_{[m]} W) = \gd(W) - 1 \leqslant \gd(V) - 1\]
since $W$ is a quotient module of $V$. Therefore, by the induction hypothesis, $\reg(D_i W)$ is finite, so in particular $\hd_2(D_i W)$ is finite; one has:
\[ \hd_2(D_iW) \leqslant \reg(D_i W) + 2.\]
Consequently, all children of $W$ are presented in finite degrees by Corollary \ref{corollary two}.

\textbf{Step 2:} We show that $\mathcal{T}(V)$ is a finite tree. Let $W$ be a nonzero vertex in $\mathcal{T}(V)$. We already proved that $W$ is presented in finite degrees in the previous step, and $\reg(D_{[m]} W)$ is finite by the induction hypothesis. By Corollary \ref{corollary two}, $t(W)$ is a finite number; by Corollary \ref{corollary one}, $t(W') < t(W)$ for any child $W'$ of $W$. Since $t(U) \geqslant -m$ for every vertex $U$ in $\mathcal{T}(V)$, it can has only finitely many levels. As each level has only finitely many vertices, the claim follows.

\textbf{Step 3:} We show that every vertex in the tree $\mathcal{T}(V)$ has finite regularity. Take an arbitrary vertex $W$ on the lowest level of $\mathcal{T}(V)$. Of course we can assume that $W$ is nonzero. Since $W$ has no children, we have $K_{[m]}W=0$, so we have a short exact sequence
\begin{equation*}
0 \to W^{\oplus m} \to \Sigma_{[m]} W \to D_{[m]} W \to 0.
\end{equation*}
The $\C$-module $D_{[m]}W$ is presented in finite degrees, and has finite regularity by the induction hypothesis. Moreover, $\mathcal{R}(W) = [m]$ and $F_{\mathcal{R}(W)} W= D_{[m]} W$. Thus by Lemma \ref{recursive inequality}, $\reg(W) \leqslant \reg (D_{[m]} W) + 1$ is finite. Consequently, every vertex on the lowest level has finite regularity.

Now we consider an arbitrary vertex $U$ on the second lowest level. Note that all children of $U$ have finite regularity as they appear in the lowest level, and $D_{[m]} U$ has finite regularity by the induction hypothesis. Combining the conclusions of Lemma \ref{filtration} and Lemma \ref{recursive inequality}, conclude that $U$ has finite regularity, too.

Since the tree $\mathcal{T}(V)$ has only finitely many levels, recursively one can show that all vertices in it, including $V$, have finite regularity.
\end{proof}

Applying Theorem \ref{main theorem} to a skeleton of $\FI^m$, we immediate deduce the finiteness of regularity of $\FI^m$-modules presented in finite degrees, and hence partially answered a question proposed in \cite[Subsection 6.1]{LY2}.

\begin{remark} \normalfont
As mentioned before, the above proof relies on an inductive machinery originated in \cite{GL2} and further developed in \cite{GL3}. In the second paper, for a few other categories such as $\FI_d$ and $\mathrm{OI}_d$ and Noetherian commutative coefficient rings, regularity of finitely generated representations is shown to be finite. But at this moment we cannot establish a version of Theorem \ref{main theorem} for these categories because numerical invariants for their representations, sharing similar properties as $t(V)$, are not available yet. Besides, for an $\FI^m$-module $V$ presented in finite degrees, an explicit upper bound of $\reg(V)$ is still missing for $m > 1$. In the special case $m = 1$, it is not hard to find an upper bound for $\reg(V_T)$, and use it to deduce an upper bound for $\reg(V)$, as was done in \cite{L}. However, for $m > 1$, we are not able to obtain a simple upper bound for the regularity of $V_T$.
\end{remark}

\section{A few consequences} \label{consequences}

\subsection{Category of modules presented in finite degrees}
A consequence Theorem \ref{main theorem} is:

\begin{corollary} \label{abelian category}
The category of $\C$-modules presented in finite degrees is abelian.
\end{corollary}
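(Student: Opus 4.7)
The plan is to verify that the full subcategory of $\C\Mod$ consisting of modules presented in finite degrees is closed under kernels, cokernels, and finite direct sums. Closure under finite direct sums is immediate from the definition, so the real content is stability under taking kernels and cokernels of an arbitrary morphism $f : V \to W$ between modules presented in finite degrees. The main leverage is Theorem \ref{main theorem}: being presented in finite degrees is equivalent to having finite regularity, which in turn forces every $\hd_s$ to be finite. Thus I can freely invoke finiteness of $\hd_s(V)$ and $\hd_s(W)$ for all $s \geqslant 0$.

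I would first handle $\coker f$. Since $\coker f$ is a quotient of $W$, clearly $\gd(\coker f) \leqslant \gd(W)$. Then, using the short exact sequence
\[ 0 \to \im f \to W \to \coker f \to 0 \]
and the associated long exact sequence in homology, together with $\gd(\im f) \leqslant \gd(V)$, I obtain $\hd_1(\coker f) \leqslant \max\{\hd_1(W), \gd(\im f)\} \leqslant \max\{\hd_1(W), \gd(V)\}$, which is finite. Hence $\coker f$ is presented in finite degrees.

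Next, by Theorem \ref{main theorem} applied to $\coker f$, all $\hd_s(\coker f)$ are finite. Feeding this back into the long exact sequence for $0 \to \im f \to W \to \coker f \to 0$ yields, for each $s$,
\[ \hd_s(\im f) \leqslant \max\{\hd_s(W), \hd_{s+1}(\coker f)\}, \]
so in particular $\hd_1(\im f)$ and $\hd_2(\im f)$ are finite. Now use the short exact sequence $0 \to \ker f \to V \to \im f \to 0$ to conclude
\[ \gd(\ker f) \leqslant \max\{\gd(V), \hd_1(\im f)\}, \qquad \hd_1(\ker f) \leqslant \max\{\hd_1(V), \hd_2(\im f)\}, \]
both finite. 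Therefore $\ker f$ is presented in finite degrees as well.

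I do not expect any serious obstacle: every step is a routine consequence of long exact sequences in homology once one knows that \emph{all} homological degrees are finite, which is exactly what the equivalence in Theorem \ref{main theorem} supplies. The one point that deserves care is not to try to bound $\hd_s(\im f)$ directly from $0 \to \ker f \to V \to \im f \to 0$ (this would loop us back to needing information on $\ker f$); the correct order is to first dispatch $\coker f$, then obtain the finiteness of $\hd_s(\im f)$ from the right-hand short exact sequence, and only then deal with $\ker f$.
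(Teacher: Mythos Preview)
Your proof is correct and follows essentially the same approach as the paper: first show $\coker f$ is presented in finite degrees using $0\to\im f\to W\to\coker f\to 0$ and $\gd(\im f)\leqslant\gd(V)$, then invoke Theorem~\ref{main theorem} to obtain finiteness of all $\hd_s(\coker f)$ (and hence of $\hd_s(\im f)$), and finally deduce that $\ker f$ is presented in finite degrees from $0\to\ker f\to V\to\im f\to 0$. The paper's version is slightly terser, but the logical structure and the order of handling $\coker f$, $\im f$, $\ker f$ are identical.
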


\begin{proof}
Let $\alpha: U \to V$ be a morphism such that both $U$ and $V$ are presented in finite degrees. We have to show that the kernel, cokernel, and image of $\alpha$ are all presented in finite degrees.

Consider the short exact sequence
\begin{equation*}
0 \to \im \alpha \to V \to \coker \alpha \to 0.
\end{equation*}
As a quotient module $U$, $\im \alpha$ is generated in finite degrees. Applying the homology functor we deduce that $\coker \alpha$ is presented in finite degrees. Therefore, both $\coker \alpha$ and $V$ have finite regularity by the previous theorem, and hence their homological degrees are finite. Consequently, all homological degrees of $\im \alpha$ are finite as well.

Now turn to the short exact sequence
\begin{equation*}
0 \to \ker \alpha \to U \to \im \alpha \to 0.
\end{equation*}
By a similar argument, we deduce that all homological degrees of $\ker \alpha$ are finite.
\end{proof}

\begin{remark} \normalfont
Let $V$ be a $\C$-module. Consider the following conditions:
\begin{enumerate}
\item $V$ is presented in finite degrees;
\item all homological degrees of $V$ are finite;
\item $\reg(V)$ is finite;
\item the category of $\C$-modules presented in finite degrees is abelian.
\end{enumerate}
Clearly, (3) implies (2), and (2) implies (1). Moreover, (1) and (2) are equivalent if and only if (4) holds. Indeed, if (1) implies (2), then the proof of the above corollary tells us that (4) holds. Conversely, suppose that (4) holds and $V$ is presented in finite degrees. Let $0 \to W \to P \to V \to 0$ be a short exact sequence such that $P$ is a free module generated in finite degrees (and so presented in finite degrees). Then $W$ is presented in finite degrees as well. In particular, $\hd_2(V) = \hd_1(W)$ is finite. Replacing $V$ by $W$ and repeating the above argument, one can eventually show that all homological degrees of $V$ are finite.
\end{remark}

\subsection{$\FI^m$-modules}
By Corollary \ref{abelian category}, the category of $\FI^m$-modules presented in finite degrees is an abelian category. Using this fact, many previously know results (for example in \cite{LY2}) about finitely generated $\FI^m$-modules over a commutative Noetherian coefficient ring, whose proofs only rely on the condition that both $\gd(V)$ and $\prd(V)$ are finite, can be extended to $\FI^m$-modules presented in finite degrees over any commutative coefficient ring. For example, \emph{relative projective $\FI^m$-modules} (also called \emph{semi-induced modules or $\sharp$-filtered modules} in literature) are defined in \cite[Subsection 1.4]{LY2} over any commutative coefficient ring. By \cite[Theorem 1.3]{LY2}, these modules are presented in finite degrees. Accordingly, we can extend \cite[Theorem 1.5]{LY2} to the setup of $\FI^m$-modules presented in finite degrees over any commutative ring.

\begin{theorem}
Let $V$ be an $\FI^m$-module presented in finite degrees over a commutative ring $k$. Then there exists a complex
\begin{equation*}
F^{\bullet}: \quad 0 \to V \to F^0 \to F^1 \to \ldots \to F^\ell \to 0
\end{equation*}
such that the following statements hold:
\begin{enumerate}
\item each $F^j$ is a relative projective module with $\gd(F^j) \leqslant \gd(V) - j$;
\item $\ell \leqslant \gd(V)$,
\item all homology groups $H^j (F^{\bullet})$ of this complex are torsion modules presented in finite degrees.
\end{enumerate}

Consequently, $\Sigma_1^{n_1} \ldots \Sigma_m^{n_m} V$ is a relative projective module if $n_i \geqslant t_i(H^j (F^{\bullet})) + 1$ for all $0 \leqslant j \leqslant \ell$ and $i \in [m]$.
\end{theorem}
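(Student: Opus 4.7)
The plan is to adapt the inductive construction from \cite[Theorem 1.5]{LY2}, established there for finitely generated modules over a Noetherian coefficient ring, to the present setting of modules presented in finite degrees over an arbitrary commutative $k$. The crucial new input making this adaptation work is Corollary \ref{abelian category}: every kernel, cokernel, and image produced during the recursion remains presented in finite degrees, which is precisely what the Noetherian hypothesis accomplished in \cite{LY2}. Once this replacement is in place, one can follow \cite{LY2} almost verbatim, substituting ``presented in finite degrees'' for ``finitely generated'' throughout, and using Theorem \ref{main theorem} in place of any finiteness assertion extracted from Noetherianity.

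First I would induct on $d = \gd(V)$; the case $d = -1$ (so $V = 0$) is trivial. For the inductive step, invoke the construction in the proof of \cite[Theorem 1.3]{LY2} to produce a homomorphism $\alpha : V \to F^0$ into a relative projective module with $\gd(F^0) \leqslant d$, arranged so that both $\ker \alpha$ and $\coker \alpha$ are torsion and $\gd(\coker \alpha) \leqslant d - 1$. Setting $V^{(1)} = \coker \alpha$, Corollary \ref{abelian category} ensures $V^{(1)}$ is presented in finite degrees, so the induction hypothesis furnishes a complex $0 \to V^{(1)} \to F^1 \to \cdots \to F^\ell \to 0$ of length at most $d - 1$ with the stated properties; splicing this with the short exact sequences $0 \to \ker \alpha \to V \to \im \alpha \to 0$ and $0 \to \im \alpha \to F^0 \to V^{(1)} \to 0$ yields the desired complex for $V$. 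Properties (1) and (2) are immediate, and for (3) the homology groups of $F^\bullet$ are either torsion kernels introduced at each stage or homologies of the shorter complex, hence torsion; Corollary \ref{abelian category} then guarantees that they are presented in finite degrees.

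For the consequence, apply the exact functor $\Sigma_1^{n_1} \cdots \Sigma_m^{n_m}$ to $F^\bullet$. The homology at position $j$ of the shifted complex is $\Sigma_1^{n_1} \cdots \Sigma_m^{n_m} H^j(F^\bullet)$. By Lemma \ref{basic properties}(6), each application of $\Sigma_i$ strictly decreases $t_i$ whenever it is nonnegative while no $\Sigma_j$ increases any $t_i$, so the hypothesis $n_i \geqslant t_i(H^j(F^\bullet)) + 1$ for all $i, j$ forces every $t_i$ on every shifted homology down to $-1$. On the other hand, a short transitivity argument using the transitive group actions in the definition of $\C$ shows that shifts of torsion modules remain torsion, so each shifted homology is simultaneously torsion and (by Lemma \ref{basic properties}(4)) torsion-free, hence zero. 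The resulting exact sequence $0 \to \Sigma_1^{n_1} \cdots \Sigma_m^{n_m} V \to \Sigma_1^{n_1} \cdots \Sigma_m^{n_m} F^0 \to \cdots \to \Sigma_1^{n_1} \cdots \Sigma_m^{n_m} F^\ell \to 0$ consists of relative projective modules (shifts of induced modules decompose as direct sums of induced modules by Lemma \ref{basic properties}(1), so shifts of semi-induced modules are semi-induced), and peeling off from the right, using that the class of relative projectives is closed under kernels of surjections onto another relative projective, forces $\Sigma_1^{n_1} \cdots \Sigma_m^{n_m} V$ itself to be relative projective.

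The main obstacle I anticipate is the detailed construction of $\alpha : V \to F^0$ in the non-Noetherian setting: one needs $F^0$ to be both relative projective and presented in finite degrees with the correct generation degree, and $\alpha$ to have torsion kernel and cokernel of strictly smaller generation degree, all while remaining within the class where Corollary \ref{abelian category} applies. With that corollary replacing the Noetherian input of \cite{LY2}, this reduces to careful bookkeeping of generation degrees throughout the recursion, but one must check that no step in \cite[Theorem 1.3]{LY2} implicitly uses Noetherianity of $k$.
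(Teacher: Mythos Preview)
Your proposal is correct and follows essentially the same approach as the paper: adapt the proof of \cite[Theorem 1.5]{LY2} from the finitely generated Noetherian setting to modules presented in finite degrees, using the finiteness results established earlier in the paper. The paper's own proof is considerably terser than yours: it simply observes that the argument in \cite[Theorem 1.5]{LY2} and its prerequisites (\cite[Proposition 4.10, Lemma 4.8]{LY2}) use only the finiteness of $\gd(V)$ and of $t_i(V)$ for all $i\in[m]$, and that these finiteness conditions hold for $\FI^m$-modules presented in finite degrees (the latter via Corollary \ref{corollary two} and Theorem \ref{main theorem}). Your more explicit reconstruction, leaning on Corollary \ref{abelian category} to keep all intermediate modules in the right category, is a valid repackaging of the same content; in particular, the paper's observation directly resolves the concern you raise in your final paragraph about hidden Noetherian dependence.
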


\begin{proof}
The proof of \cite[Theorem 1.5]{LY2} as well as its prerequisite results, including \cite[Porposition 4.10, Lemma 4.8]{LY2}, only relies on the condition that both $\gd(V)$ and $t_i(V)$ for all $i \in [m]$ are finite. This condition still holds for $\FI^m$-modules presented in finite degrees over any commutative ring.
\end{proof}

\begin{remark} \normalfont
When $m = 1$, in \cite{LR, R2}, Ramos and the second author proved that the cohomology groups in the above finite complex are precisely the local cohomology groups of $\FI$-modules. Furthermore, Nagpal, Sam, and Snowden showed that the regularity of an $\FI$-module $V$ can be described in terms of degrees of these cohomology groups; see \cite[Conjecture 5.19]{LR} and \cite[Theorem 1.1]{NSS}. For $m > 1$, we expect these results still hold, though we could not establish them.
\end{remark}

\begin{remark} \normalfont
A generalization of the category $\FI$ is the category $\FI_G$, where $G$ is a (possibly infinite) group. This category encodes the wreath products of $G$ and all symmetric groups, and share very similar representation theoretic properties as $\FI$; see \cite{LR}. The above results for $\FI^m$-modules can be extended to $\FI_G^m$-modules using similar proofs.

\end{remark}

\end{document}